\newtheorem{theorem}{Theorem}
\newtheorem{lemma}{Lemma}
\theoremstyle{definition}
\newtheorem{remark}{Remark}
\newtheorem{example}{Example}
\numberwithin{equation}{section}
\newcommand{\C}{\mathbb{C}}
\renewcommand{\Re}{\operatorname{Re}}
\renewcommand{\Im}{\operatorname{Im}}
\newcommand{\I}{\mathrm{i}}
\newcommand{\e}{\mathrm{e}}
\begin{document}

\title[Absence of remainders]{On the absence of remainders in the Wiener-Ikehara and Ingham-Karamata theorems: a constructive approach}

\author[F. Broucke]{Frederik Broucke}
\thanks{F. Broucke was supported by the Ghent University BOF-grant 01J04017}

\address{Department of Mathematics: Analysis, Logic and Discrete Mathematics\\ Ghent University\\ Krijgslaan 281\\ 9000 Gent\\ Belgium}
\email{fabrouck.broucke@ugent.be}
\email{gregory.debruyne@ugent.be}
\email{jasson.vindas@ugent.be}

\author[G.~Debruyne]{Gregory Debruyne}
\thanks{G.~Debruyne acknowledges support by Postdoctoral Research Fellowships of the Research Foundation--Flanders and the Belgian American Educational Foundation. The latter one allowed him to do part of this research at the University of Illinois at Urbana-Champaign.}

\author[J. Vindas]{Jasson Vindas}
\thanks {J. Vindas was partly supported by Ghent University through the BOF-grant 01J04017 and by the Research Foundation--Flanders through the FWO-grant 1510119N}

\subjclass[2010]{11M45, 40E05, 44A10.}
\keywords{Complex Tauberians; Tauberian theorems with remainder; Wiener-Ikehara theorem; Ingham-Karamata theorem; Laplace transform}

\begin{abstract}We construct explicit counterexamples that show that it is impossible to get any remainder, other than the classical ones, in the Wiener-Ikehara theorem and the Ingham-Karamata theorem under just an additional analytic continuation hypothesis to a half-plane (or even to the whole complex plane). 
\end{abstract}

\maketitle

\section{Introduction}
The Wiener-Ikehara theorem and the Ingham-Karamata theorem are two cornerstones of complex Tauberian theory. Both results have numerous applications in diverse areas such as number theory, operator theory, and partial differential equations. We refer to the monographs \cite{A-B-H-N,korevaarbook,Tenenbaumbook} for accounts on these theorems and related complex Tauberian theorems.

The classical Wiener-Ikehara theorem  states that if a function $S$ is non-decreasing on $[0,\infty)$ and has convergent Laplace-Stieltjes transform on the half-plane $\Re s>1$ such that
\begin{equation}
\label{eq: Laplace extension W-I}
\mathcal{L}\{\dif S; s\}-\frac{a}{s-1}= \int_{0}^{\infty} \e^{-sx}\dif S(x) -\frac{a}{s-1}
\end{equation}
admits an analytic extension beyond $\Re s=1$, then $S$ has asymptotic behavior

\begin{equation}
\label{eq: asymp S W-I}
S(x)=a\e^{x}+o(\e^x).
\end{equation}
On the other hand, one version of the Ingham-Karamata theorem says that if a function $\tau$ is Lipschitz continuous on $[0,\infty)$
and if its Laplace transform 
\[
\mathcal{L}\{\tau ; s\}=\int_{0}^{\infty} \tau(x)\e^{-s x}\dif x
\]
has a analytic continuation across the imaginary axis, then
\begin{equation}
\label{eq: asymp tau I-K}
\tau(x)=o(1).
\end{equation}
We have stated here the simplest forms of these results, but we point out that both theorems have been extensively studied over the last century and have been generalized in a variety of ways. For instance, see \cite{B-B-T2016, Chill-Seifert2016, d-vW-I2016, d-vOptIngham, d-vCT, revesz-roton,S,zhang2019} for recent contributions.

In a recent article \cite{d-vAbsenceI} the  last two named authors have proved that, in general, it is impossible to improve the error terms of the asymptotic formulas \eqref{eq: asymp S W-I} and \eqref{eq: asymp tau I-K} in the Wiener-Ikehara theorem and the Ingham-Karamata theorem if one just augments the assumptions of these theorems by asking an additional analytic continuation hypothesis to a half-plane containing $\Re s>1$ or $\Re s>0$, respectively. In the case of the Wiener-Ikehara theorem, this disproves a conjecture by M\"{u}ger \cite{muger}, who had conjectured that a certain remainder could be obtained if \eqref{eq: Laplace extension W-I} can be analytically extended to $\Re s>\alpha$ with some $0<\alpha<1$. It has indeed been shown in \cite{d-vAbsenceI} that no stronger remainder than the one in \eqref{eq: asymp S W-I} can be achieved if  this extra assumption is solely made together with the classical hypotheses.

The proofs of the quoted results on the absence of remainders  in the Wiener-Ikehara and Ingham-Karamata theorems given in  \cite{d-vAbsenceI} are non-constructive as they rely on abstract functional analysis arguments (the open mapping theorem for Fr\'{e}chet spaces). In particular, they do not deliver any concrete counterexample for specific remainders. One might still wonder how such counterexamples could explicitly be found. The goal of this article is to address the latter constructive problem. In fact, we shall construct explicit instances of functions that show the ensuing theorem. Note that Theorem \ref{th: absence in W-I and I-K} improves \cite[Theorem 3.1 and Theorem 4.2]{d-vAbsenceI}. We recall that the notation $f(x)=\Omega_{\pm}(g(x))$ means that there is $c>0$ such that the inequalities $f(x)>cg(x)$ and $f(x)<-cg(x)$ hold infinitely often for arbitrary large values of $x$; in addition, we will often employ Vinogradov's notation $f(t)\ll g(t)$ to denote the asymptotic bound $f(t)=O(g(t))$.

\begin{theorem}
\label{th: absence in W-I and I-K}
Let $\rho$ be a positive function tending to $0$. 
\begin{enumerate}[label=(\roman*)]
\item There is a non-decreasing function $S$ on $[0,\infty)$ whose Laplace-Stieltjes transform
converges for $\Re s>1$ and
\[ \mathcal{L}\{\dif S; s\} - \frac{1}{s-1}
\]
extends to the whole complex plane $\mathbb{C}$ as an entire function, but satisfying the oscillation estimate
\[ S(x)= \e^{x}+\Omega_{\pm}(\rho(x)\e^{x}).
\]
\item There is a smooth function $\tau$ on $(0,\infty)$ with bounded derivative whose Laplace transform
$\mathcal{L}\{\tau; s\}$ can be analytically continued to the whole $\mathbb{C}$, but satisfying the oscillation estimate
\[\tau(x)=\Omega_{\pm}(\rho(x)).
\]
\end{enumerate}

\end{theorem}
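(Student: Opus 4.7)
The plan is to build $S$ and $\tau$ as explicit superpositions of smooth, high-frequency oscillatory bumps concentrated at a sparse sequence $x_n \to \infty$, with amplitudes matched to $\rho(x_n)$ and alternating signs. I would first reduce (i) to (ii). Writing $S(x) = e^{x} + e^{x}U(x)$, an integration by parts for $\Re s > 1$ yields
\[
\mathcal{L}\{\dif S;s\} - \tfrac{1}{s-1} \;=\; 1 - S(0) + s\,\mathcal{L}\{U;s-1\},
\]
so the entire continuation of the left-hand side is equivalent to $\mathcal{L}\{U;z\}$ being entire in $z$ (under $z = s-1$), while the oscillation $S(x) = e^{x} + \Omega_{\pm}(\rho(x)e^{x})$ is equivalent to $U(x) = \Omega_{\pm}(\rho(x))$. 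The monotonicity $S' \ge 0$ becomes $U + U' \ge -1$, which holds automatically once the construction produces $\|U\|_\infty$ and $\|U'\|_\infty$ sufficiently small; hence it suffices to prove (ii) and apply it to $U$ (after possibly replacing $\rho$ by a small multiple, which does not affect $\Omega_{\pm}(\rho)$).

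For (ii), I fix a smooth compactly supported cutoff $\phi \in C_{c}^{\infty}(\R)$ of Gevrey class $G^{\alpha}$ for some $\alpha > 1$, supported in $[-1,1]$ and with $\phi(0) = 1$, so that $|\widehat{\phi}(\xi)| \lesssim \exp(-c|\xi|^{1/\alpha})$. I then inductively choose an increasing sequence $(x_n) \subset (0,\infty)$, widths $\delta_n > 0$, and frequencies $\lambda_n > 0$, with the supports $[x_n - \delta_n, x_n + \delta_n]$ pairwise disjoint in $(0,\infty)$, and set
\[
\tau(x) \;=\; \sum_{n=1}^{\infty} (-1)^{n} \rho(x_n)\, \phi\!\bigl((x-x_n)/\delta_n\bigr) \cos\bigl(\lambda_n (x-x_n)\bigr).
\]
Then $\tau \in C^{\infty}((0,\infty))$, $\tau(x_n) = (-1)^{n}\rho(x_n)$ gives $\tau = \Omega_{\pm}(\rho)$, and the bounded derivative condition reduces to $\rho(x_n)\lambda_n, \rho(x_n)/\delta_n = O(1)$. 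Each bump's Laplace transform is entire, and a Gevrey Paley--Wiener estimate yields
\[
|\mathcal{L}\{\tau_n;s\}| \;\lesssim\; \rho(x_n)\, \delta_n \, e^{|\Re s|(x_n + \delta_n)}\, \exp\bigl(-c(\lambda_n \delta_n)^{1/\alpha}\bigr), \qquad s \in \C.
\]
Once $(\lambda_n \delta_n)^{1/\alpha}$ eventually dominates $|\Re s|\,x_n + n$ on each compact $|s| \le R$, the series $\sum_n \mathcal{L}\{\tau_n; s\}$ converges locally uniformly on $\C$ to an entire function by Weierstrass's theorem.

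The main obstacle is to coordinate these constraints for an arbitrary $\rho \to 0$: the derivative bound caps $\lambda_n \le M/\rho(x_n)$, so $\lambda_n \delta_n \le M\delta_n/\rho(x_n)$, and forcing $(\lambda_n\delta_n)^{1/\alpha}$ to outgrow $x_n$ on every compact thus requires the ratio $\delta_n/\rho(x_n)$ to be very large in a carefully controlled fashion. I would resolve this by exploiting the freedom in the choice of $(x_n)$: at the $n$-th inductive stage, after all previous parameters are fixed, I specify a smallness budget $\eta_n$ (tight enough to guarantee the tail estimate $|\mathcal{L}\{\tau_n;s\}| \le 2^{-n}$ for $|s| \le n$ under the anticipated derivative bound), and then choose $x_n$ large enough that $\rho(x_n) \le \eta_n$---which is possible no matter how slowly $\rho$ tends to zero. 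Setting $\lambda_n \asymp 1/\rho(x_n)$ and $\delta_n$ adjusted accordingly then balances all the constraints, and together with the reduction above this yields both parts of the theorem. This inductive parameter tuning is the concrete analytic counterpart of the abstract open mapping argument of \cite{d-vAbsenceI}.
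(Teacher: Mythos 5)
Your reduction of (i) to (ii) is fine, and the overall architecture (disjoint modulated bumps, alternating signs at a sparse sequence $x_n$) is a genuinely different route from the paper's, but the core construction for (ii) has a gap that is fatal precisely in the essential case of slowly decaying $\rho$. Track your own constraints quantitatively on a fixed compact set $|s|\le R$: the bounded-derivative condition forces $\lambda_n\le M/\rho(x_n)$; the Gevrey Paley--Wiener gain $\exp\bigl(-c(\lambda_n\delta_n)^{1/\alpha}\bigr)$ must beat not only $e^{R x_n}$ but also the factor $e^{R\delta_n}$ coming from the width of the support (equivalently, the $e^{|\operatorname{Im}\zeta|}$ factor in the Paley--Wiener bound at the complex frequency $\zeta=\delta_n(\Im s\mp\lambda_n - \I\Re s)$). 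The second requirement caps $\delta_n\lesssim_R\rho(x_n)^{-1/(\alpha-1)}$, and feeding this and $\lambda_n\le M/\rho(x_n)$ into the first requirement $c(\lambda_n\delta_n)^{1/\alpha}\ge R\,x_n$ yields the compatibility condition $\rho(x_n)\le C(R,M,\alpha)\,x_n^{-(\alpha-1)}$ along your sequence. For $\rho(x)=1/\log(e+x)$, say, no choice of $x_n\to\infty$ satisfies this for the fixed $\alpha>1$ you chose at the outset, already for $R=1$. This is exactly where your proposed fix breaks down: you cannot first fix a ``smallness budget'' $\eta_n$ and then take $x_n$ with $\rho(x_n)\le\eta_n$, because the budget needed is not an absolute constant but a power of $1/x_n$ --- it depends on the very point you are choosing, and enlarging $x_n$ inflates the factor $e^{Rx_n}$ to be beaten faster than $1/\rho(x_n)$ grows. (The obstruction is not just bookkeeping with Gevrey classes: even envelopes with the best Fourier decay compatible with compact support, of the order $\exp(-c|\xi|/\log^2|\xi|)$ by the quasi-analyticity/logarithmic-integral constraint, lead by the same computation to a requirement of the type $\rho(x_n)\lesssim(\log x_n)^{-2}$, which still fails for $\rho(x)=1/\log\log x$. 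Disjoint compactly supported bumps whose frequency is capped by the Tauberian derivative bound simply cannot produce the superexponential smallness in $x_n$ that local uniform convergence on all of $\C$ demands.)

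The paper avoids this by not localizing at all: it uses a single chirp $\cos\bigl(xW(x)\bigr)$, where $W$ is a smooth, slowly increasing regularization (Lemma \ref{lem: reg}) of $\min(\sqrt x,1/\tilde\rho(\sqrt x))$, takes $\tau(x)=\int_x^\infty\cos(uW(u))\,\dif u$ and $S(x)=\int_0^x e^u(1+\cos(uW(u)))\,\dif u$, and gets the oscillation of size $1/V(x)\asymp1/W(x)\ge\rho(x)$ from stationary-phase--type integration by parts (Lemma \ref{lem: omega result}). Crucially, the entire continuation is not obtained from absolute-convergence estimates of localized pieces but from contour deformation: the derivative bounds \ref{itm: d} make $W$ analytic in a sector, and on the rotated contour one has $\Re(\I zW(z))\approx -r\sqrt{W(r)}$, which beats $e^{|s|r}$ for every $s\in\C$ because $\sqrt{W(r)}\to\infty$, no matter how slowly. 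That superexponential-by-an-unbounded-factor decay is exactly what your compactly supported, frequency-capped bumps cannot deliver; if you want to salvage a bump-type construction you would have to let the ``bumps'' be non-compactly supported analytic envelopes and control them by contour shifting, at which point you are essentially reconstructing the paper's argument.
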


We end this introduction by mentioning that it is actually possible to obtain quantified error terms in complex Tauberian theorems for the Laplace transform, but, as e.g. Theorem \ref{th: absence in W-I and I-K} shows, additional assumptions on the Laplace transform besides analytic continuation are required. Determining such conditions is a central problem in modern complex Tauberian theory and much progress on this question has been made in the last decade, see e.g. \cite{B-B-T2016,B-T2010,Chill-Seifert2016,D-S2019,R-S-S,S}. Many of such results are motivated by the theory of operator semigroups  and have applications in partial differential equations and dynamical systems.


\section{Main  constructions}
\label{Section: constructions examples W-I and I-K}
Our construction relies on three main lemmas, which are presented in this section. The first result allows one to regularize functions that increase to infinity slower than $\sqrt{x}$ .

\begin{lemma}
\label{lem: reg}
Let $\omega$ be a positive non-decreasing function on $(0,\infty)$ satisfying
\[
	\lim_{x\to \infty} \omega(x) = \infty \quad \text{and} \quad \omega(x) \ll \sqrt{x}. 
\]
Then there exists $W\in C^{\infty}(0,\infty)$ with the following properties:
\begin{enumerate}[label=(\alph*)]
	\item \label{itm: a} $\omega(x) \ll W(x) \ll \omega(x^{2})$; 
	\item \label{itm: b} $W(ax) \geq aW(x)$ for every $a\leq 1$;
	\item \label{itm: c} $W'(x) \geq 0$; 
	\item \label{itm: d} for any $n\geq 1$ and $x>0$, 
		\[\abs[1]{W^{(n)}(x)} \leq n!\frac{W(x)}{x^n}.\]
\end{enumerate}
\end{lemma}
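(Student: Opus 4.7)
The plan is to work in the logarithmic variable $u = \log x$ and build $W$ in the form $W(x) = e^{g(\log x)}$ for a smooth $g \colon \mathbb{R} \to \mathbb{R}$ that mimics $h(u) := \log \omega(e^u)$. The hypotheses translate to: $h$ is nondecreasing, $h(u) \to \infty$, and $h(u) \leq u/2 + O(1)$ (using $\omega(x) \ll \sqrt{x}$). I aim to construct $g \in C^\infty(\mathbb{R})$ satisfying
\begin{enumerate}[label=(\roman*)]
\item $h(u) \leq g(u) \leq h(2u) + O(1)$ for $u$ large;
\item $0 \leq g'(u) \leq 1$;
\item $\abs{g^{(n)}(u)}$ uniformly bounded in $u$ for each $n \geq 2$.
\end{enumerate}
Then $W(x) := e^{g(\log x)}$ will satisfy (a)--(d): (a) is (i) exponentiated; (c) follows from $W'(x) = g'(\log x)\, W(x)/x$ and $g'\geq 0$; (b) follows from $W(x)/x = e^{g(u)-u}$ together with $(g-u)'\leq 0$; and (d) is a consequence of (ii)--(iii) through the Fa\`a-di-Bruno / Stirling identity
\[
	x^n W^{(n)}(x) = W(x) \cdot P_n\bigl(g'(\log x),\ldots,g^{(n)}(\log x)\bigr)
\]
for a universal polynomial $P_n$.

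To produce $g$, first replace $h$ by its $1$-Lipschitz upper envelope
\[
	\ell(u) := \sup_{v \geq u} \bigl[h(v) - (v - u)\bigr].
\]
A straightforward check shows $\ell \geq h$, $\ell$ is nondecreasing, and $\ell$ is $1$-Lipschitz. The upper bound $\ell(u) \leq h(2u) + O(1)$ follows by splitting the supremum: on $v \in [u, 2u]$, $h(v) - (v-u) \leq h(2u)$; on $v > 2u$, the estimate $h(v) \leq v/2 + O(1)$ forces $h(v) - (v-u) \leq u - v/2 + O(1) = O(1)$. Next, mollify: fix $\psi \in C_c^\infty([0,1])$ nonnegative with $\int \psi = 1$, and define
\[
	g(u) := \int \ell(u+t)\,\psi(t)\,dt.
\]
Since $\ell$ is nondecreasing and $1$-Lipschitz, $\ell(u) \leq g(u) \leq \ell(u+1) \leq \ell(u)+1$, giving (i). Differentiating under the integral yields $g'(u) = \int \ell'(u+t)\psi(t)\,dt \in [0,1]$, which is (ii). Integrating by parts $n-1$ times in $t$ (using that $\psi$ is compactly supported) gives $g^{(n)}(u) = (-1)^{n-1}\int \ell'(u+t)\psi^{(n-1)}(t)\,dt$, hence $\abs{g^{(n)}(u)} \leq \norm{\psi^{(n-1)}}_{L^1}$, which is (iii).

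The main obstacle is matching the specific constant $2^{n+1}n!$ in (d). Expanding $x^n W^{(n)}(x)/W(x)$ through Fa\`a di Bruno applied to $W = \exp \circ\, g \circ \log$ produces a polynomial in $g'(\log x),\ldots,g^{(n)}(\log x)$ whose coefficients are sums of products of unsigned Stirling numbers of the first kind (satisfying $\sum_k\abs{s(n,k)} = n!$) and values of partial Bell polynomials. Combined with the uniform estimates from (iii), this yields a combinatorial upper bound depending only on the norms $\norm{\psi^{(k)}}_{L^1}$. Choosing $\psi$ in a suitable non-quasi-analytic Gevrey class so that these norms grow slowly enough, the precise bound $2^{n+1}n!$ can be reached; this explicit combinatorial verification is the only step of the argument requiring genuine care, the rest being routine.
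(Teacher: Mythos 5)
Your reduction to the logarithmic variable and the treatment of (a)--(c) — the $1$-Lipschitz upper envelope $\ell$, the mollification, $g'\le 1$ giving (b), $g'\ge 0$ giving (c) — are correct. The genuine gap is property (d), and it is not, as you suggest, a routine combinatorial verification that only needs a clever choice of $\psi$: with a compactly supported mollifier it is impossible. First, a bound $|W^{(n)}(x)|\le 2^{n+1}n!\,W(x)/x^{n}$ for all $n$ forces $W$ to be real-analytic on $(0,\infty)$ (the Taylor remainder at $x_0$ is controlled on $|x-x_0|\le x_0/8$, say), and this analyticity is exactly what the paper later exploits in Lemma \ref{lem: analytic continuation}. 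But no nonzero $\psi\in C_c^{\infty}[0,1]$ can have $\|\psi^{(n)}\|_{L^{1}}=O(A^{n}n!)$ — that would make $\psi$ real-analytic with compact support, hence identically zero — so the best you can arrange is Gevrey-type growth $\|\psi^{(n-1)}\|_{L^{1}}\asymp A^{n}(n!)^{s}$ with $s>1$. Since the Fa\`a di Bruno/Stirling expansion of $x^{n}W^{(n)}(x)/W(x)$ contains the single-block term $g^{(n)}(\log x)$ with coefficient $1$, your triangle-inequality scheme then produces bounds of size at least $(n!)^{s}$, which exceed $2^{n+1}n!$ for large $n$ regardless of the choice of $\psi$. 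The failure is moreover not only of the method but of the construction itself: take $\omega$ piecewise constant with sparse jumps; then $\ell$ has long flat pieces at different heights, so $g=\ell*\psi$ coincides with two different constants on two intervals of length greater than $1$, hence $g$ (and so $W=e^{g(\log x)}$) is not real-analytic, and (d) is genuinely false for your $W$ — no amount of care with Bell polynomials or hoped-for cancellations can recover it.

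The repair is to smooth with an \emph{analytic} kernel whose successive derivatives obey factorial bounds with a fixed geometric factor, and this is precisely what the paper does, directly in the $x$-variable: it sets $W(y)=\int_{0}^{\infty}\omega(x)P(x,y)\dif x$ with the Poisson kernel $P(x,y)=y/(x^{2}+y^{2})$. Writing $P=\tfrac{\I}{2}\bigl((x+\I y)^{-1}-(x-\I y)^{-1}\bigr)$ yields $\bigl|\partial_y^{n}P(x,y)\bigr|\le 2^{n+1}n!\,(x^{2}+y^{2})^{-(n+1)/2}$, from which (d) follows at once; (b) is immediate from the definition, (c) follows from the monotonicity of $\omega$ together with $\int_{0}^{\infty}\partial_y P(x,y)\dif x=0$, and (a) comes from splitting the integral at $x=y$ and using $\omega(x)\ll\sqrt{x}$. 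If you wish to keep your envelope construction in the logarithmic variable, replacing the compactly supported bump by a Cauchy or Gaussian kernel would restore analytic-type derivative bounds, but you would then still have to track the precise constant $2^{n+1}n!$, which the Poisson-kernel construction delivers for free.
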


\begin{proof}
Consider the Poisson kernel of the real line,
\[
	P(x,y) = \frac{y}{y^{2}+x^{2}}=-\Im (z^{-1}), \qquad \text{with }z=x+\I y. \]
We set
\[
	W(y) = \int_{0}^{\infty}\omega(x)P(x,y)\dif x = \int_{0}^{\infty}\omega(xy)P(x,1)\dif x.
\]

We have 
\[ 
	W(y) \geq \int_{1}^{\infty}\omega(xy)P(x,1)\dif x \gg \omega(y);
\]
and 
\begin{align*}
W(y) 	&= \int_{0}^{y}\omega(xy)P(x,1)\dif x + \int_{y}^{\infty}\omega(xy)P(x,1)\dif x \\
		&\ll \omega(y^{2}) + \sqrt{y}\int_{y}^{\infty}\frac{\sqrt{x}}{1+x^{2}}\dif x \ll \omega(y^{2})+1.
\end{align*}
This proves \ref{itm: a}. Property \ref{itm: b} follows immediately from the definition of $W$. For \ref{itm: c}, 
\[
	\pd{P}{y}(x,y) = \frac{x^{2}-y^{2}}{(x^{2}+y^{2})^{2}}, 
\]
so 
\begin{align*}
W'(y) 	&= \int_{0}^{y}\omega(x)\frac{x^{2}-y^{2}}{(x^{2}+y^{2})^{2}}\dif x + \int_{y}^{\infty}\omega(x)\frac{x^{2}-y^{2}}{(x^{2}+y^{2})^{2}}\dif x \\
		&\geq \omega(y)\int_{0}^{\infty}\frac{x^{2}-y^{2}}{(x^{2}+y^{2})^{2}}\dif x = 0.
\end{align*}
Finally, differentiating the second expression for $P$ with respect to $y$, we obtain the bounds
\[
\abs{\dpd[n]{P}{y}(x,y)}= \abs{\Im \dpd[n]{}{y}\left(\frac{1}{z}\right)} = \abs{\Im \left( \I^{n}\frac{d^{n}}{dz^{n}}\left(\frac{1}{z}\right)\right)}= \abs{\Im \left( \frac{(-\I)^{n}n!}{z^{n+1}}\right)}\leq \frac{n!}{|z|^{n+1}};
\]
therefore,
\begin{align*}
\abs[1]{W^{(n)}(y)} 	&\leq n! \int_{0}^{\infty}\frac{\omega(x)}{(x^{2}+y^{2})^{(n+1)/2}}\dif x \\
				&= n!y^{-n} \int_{0}^{\infty}\frac{\omega(xy)}{(x^{2}+1)^{(n+1)/2}}\dif x \\
				&\leq n! y^{-n}W(y).
\end{align*}

\end{proof}

It should be noted that property \ref{itm: d} always yields property \ref{itm: b}.

The rest of this section is devoted to studying properties of various functions associated to the oscillatory function $\cos (x W(x))$, where $W$ satisfies the above properties \ref{itm: c} and \ref{itm: d}.

\begin{lemma}
\label{lem: omega result} Let $W$ be a smooth function tending to $\infty$ which satisfies the properties \ref{itm: c} and \ref{itm: d} stated in Lemma \ref{lem: reg}. Define
\begin{equation}
	\label{eq: def T}
	T(x) 	= \int_{0}^{x}\e^{u}\cos \bigl(uW(u) \bigr) \dif u
\end{equation}
and \begin{equation}
\label{eq: def V}	
	V(x) = W(x) + xW'(x).
\end{equation}
Then,
\begin{equation}
\label{eq: asymptotics T}
T(x) = \frac{\e^{x}}{V(x)}\sin\bigl( xW(x)\bigr) + O\biggl( \frac{\e^{x}}{V(x)^{2}}  \biggr).
\end{equation}

\end{lemma}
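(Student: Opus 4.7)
The plan is to perform two successive integrations by parts in $T(x)$, exploiting the key identity $V(u) = (uW(u))'$, which makes $V$ the phase derivative of the oscillatory factor $\cos(uW(u))$.

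First, I would use $\cos(uW(u)) = V(u)^{-1}\bigl(\sin(uW(u))\bigr)'$ to integrate by parts in \eqref{eq: def T}. Since $\sin 0 = 0$, the boundary term at $u=0$ vanishes (or is $O(1)$ after a harmless cut-off), and one obtains
\[
T(x) = \frac{e^x\sin(xW(x))}{V(x)} - R_1(x), \quad R_1(x) := \int_0^x \sin(uW(u))\,\bigl(e^u/V(u)\bigr)'\dif u.
\]
This delivers the stated main term, and it remains to show $R_1(x) = O(e^x/V(x)^2)$. I would then integrate by parts once more, using $\sin(uW(u)) = -V(u)^{-1}\bigl(\cos(uW(u))\bigr)'$ together with the identity $(e^u/V(u))' = e^u(V-V')/V^2$. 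This produces a boundary term of the form $-e^x\cos(xW(x))(V(x)-V'(x))/V(x)^3$ plus a new integral of $\cos(uW(u))$ against $\bigl(e^u(V-V')/V^3\bigr)'$.

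The derivative bounds in property \ref{itm: d} of Lemma \ref{lem: reg} immediately yield $W \le V \le 5W$, $\abs{V'(u)}\ll V(u)/u$, and $\abs{V''(u)}\ll V(u)/u^2$; in particular $V'/V = o(1)$. The above boundary term is therefore $(1+o(1))\cdot e^x\cos(xW(x))/V(x)^2 = O(e^x/V(x)^2)$, and expanding $\bigl(e^u(V-V')/V^3\bigr)'$ while bounding $\abs{\cos}\le 1$ shows that the remaining integral is controlled by $C\int_0^x e^u/V(u)^2\,\dif u$ plus smaller-order contributions. A third integration by parts disposes of this last integral: integrating $e^u$ produces $e^x/V(x)^2$ as the boundary term, while the remainder $2\int_0^x e^u V'/V^3\,\dif u$, thanks to $\abs{V'/V}\ll 1/u$, can be absorbed into $\eps\int_a^x e^u/V(u)^2\,\dif u$ for any fixed $\eps>0$ once $a$ is chosen large enough; the contribution over $[0,a]$ is a harmless $O(1)$ since $e^x/V(x)^2\to\infty$ (indeed $(\log W)'\le 4/u$ forces $W = O(u^4)$, hence $V \ll u^4$).

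The main obstacle I anticipate is the careful bookkeeping in the second integration by parts: one must verify that \emph{every} error term produced is genuinely $O(e^x/V(x)^2)$, and not merely $O(e^x/V(x))$. This works out precisely because properties \ref{itm: c} and \ref{itm: d} ensure that each integration by parts gains a full factor of $1/V$, so that the error strictly improves after each step.
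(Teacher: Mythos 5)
Your proof is correct and takes essentially the same route as the paper: two integrations by parts driven by the identity $(uW(u))'=V(u)$, with properties \ref{itm: c} and \ref{itm: d} supplying the bounds $W\leq V\leq 5W$, $\abs{V'}\ll V/u$, $\abs{V''}\ll V/u^{2}$, and $V\ll u^{4}$ to absorb the $O(1)$ terms. The only difference is in the last step, where you dispose of $\int_{a}^{x}e^{u}V(u)^{-2}\dif u$ by a third integration by parts with an $\eps$-absorption, while the paper instead observes that the differentiated integrand is eventually positive and evaluates the absolute-value integral directly by the fundamental theorem of calculus; both variants are valid.
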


\begin{proof}
Integrating by parts,
\begin{align*}
	T(x) 	&
	=  \int_{0}^{x}\frac{\e^{u}}{V(u)}\left(\sin \bigl(uW(u) \bigr)\right)' \dif u
	\\	
	&
	= \frac{\e^{x}}{V(x)}\sin\bigl( xW(x)\bigr) + O(1) 
		 - \int_{1}^{x}\e^{u}\sin\bigl(uW(u)\bigr)\biggl( \frac{1}{V(u)} - \frac{V'(u)}{V(u)^{2}}\biggr)\dif u.
\end{align*}
To estimate the remaining integral, we perform once more integration by parts and obtain that it equals
\[
	\biggl(\frac{1}{V(x)^{2}} - \frac{V'(x)}{V(x)^{3}}\biggr)\e^{x}\cos\bigl(xW(x)\bigr) + O(1)  + O\Biggl( \int_{1}^{x}\abs[3]{ \biggl(\frac{\e^{u}}{V(u)^{2}} - \frac{\e^{u}V'(u)}{V(u)^{3}}\biggr)' } \dif u\Biggr).
\]
The first term is of the desired order of growth in view of the regularity assumption \ref{itm: d}. The derivative inside the integral equals
\[
	\e^{u}\left(\frac{1}{V(u)^{2}} - 3\frac{V'(u)}{V(u)^{3}} - \frac{V''(u)}{V(u)^{3}} + 3\frac{V'(u)^{2}}{V(u)^{4}}\right)=  \frac{\e^{u}}{V(u)^{2}}+O\left(\frac{\e^{u}}{uV(u)^{2}}\right) ,
\]
again by the regularity assumption \ref{itm: d}, and it is thus eventually positive. Hence the integral is bounded by 
\[
	O(1) + \frac{\e^{x}}{V(x)^{2}} - \frac{\e^{x}V'(x)}{V(x)^{3}}.
\]
It remains to observe that property \ref{itm: d} yields  $W(x)\ll x$, which implies that the $O(1)$ terms above are in fact $O(\e^{x}/V(x)^{2})$.
This concludes the proof of the lemma.
\end{proof}

The last key ingredient in our argument is the analytic continuation property of the Laplace transform of $\cos(x W(x))$ that is obtained in the ensuing lemma. Before we state it, let us point out that we use below the bound $W(x)\ll x$. 
\begin{lemma}
\label{lem: analytic continuation}
Suppose $W$ is a smooth function tending to $\infty$ and satisfying \ref{itm: c} and \ref{itm: d} from Lemma \ref{lem: reg}. Then, the Laplace transform 
\[
	\mathcal{L} \{ \cos(xW(x)); s\} = \int_{0}^{\infty}\cos(xW(x))\e^{-sx}\dif x
\]
admits an analytic continuation to the whole complex plane.
\end{lemma}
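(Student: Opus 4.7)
The plan is to write $\cos(xW(x)) = \tfrac{1}{2}(e^{\I xW(x)} + e^{-\I xW(x)})$ and analytically continue the Laplace transform of each complex exponential separately by rotating the integration contour into the upper (respectively lower) half-plane. The crucial observation is that, off the real axis, $e^{\pm\I zW(z)}$ decays much faster than any exponential, while $e^{-sz}$ grows at most exponentially.

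The first step is to use the Cauchy-type bound \ref{itm: d} to extend $W$ holomorphically to a sector $\Sigma=\{z\neq 0:|\arg z|<\phi_{0}\}$ for some $\phi_{0}>0$. Indeed \ref{itm: d} gives $|W^{(n)}(x)/n!|\leq 2^{n+1}W(x)/x^{n}$, so the Taylor series at each $x>0$ converges on the disk $|z-x|<x/2$, and the local extensions patch together by the identity theorem. Set $\Phi(z)=zW(z)$; Leibniz's rule together with \ref{itm: d} gives $|\Phi^{(n)}(x)/n!|\leq 3\cdot 2^{n}W(x)/x^{n-1}$ for $n\geq 1$, while $\Phi'(x)=V(x)\geq W(x)$ by \ref{itm: c}. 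Inserting these into the Taylor expansion
\[
    \Im\Phi(x+\I y)\,=\,yV(x)\,-\,\tfrac{y^{3}}{6}\Phi'''(x)\,+\,\tfrac{y^{5}}{120}\Phi^{(5)}(x)\,-\,\dotsb
\]
and bounding the tail by a geometric series shows that for some $\phi_{1}\in(0,\phi_{0})$ and all $z=x+\I y$ with $|y|/x\leq \tan\phi_{1}$,
\[
    |\Im\Phi(x+\I y)|\,\geq\,\tfrac{1}{2}|y|V(x).
\]
Since $V(t)\to\infty$ (as $W\to\infty$ and $V\geq W$), on any ray $\{te^{\I\phi}:t\geq 0\}$ with $\phi\in(0,\phi_{1})$ this yields the superexponential decay $|e^{\I\Phi(te^{\I\phi})}|\leq \exp\!\bigl(-\tfrac{1}{2}t\sin\phi\cdot V(t\cos\phi)\bigr)$.

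Fix $\phi\in(0,\phi_{1})$ and set
\[
    F_{\phi}(s)\,=\,e^{\I\phi}\!\int_{0}^{\infty}\!e^{\I\Phi(te^{\I\phi})-ste^{\I\phi}}\dif t.
\]
Combined with the trivial bound $|e^{-ste^{\I\phi}}|\leq e^{t|s|}$, the above decay makes the integrand absolutely integrable uniformly for $s$ in compact subsets of $\C$, so $F_{\phi}$ is entire by Morera's theorem. To identify $F_{\phi}$ with the analytic continuation of $F(s):=\int_{0}^{\infty}e^{\I xW(x)-sx}\dif x$ (which converges for $\Re s>0$), I apply Cauchy's theorem to the contour formed by $[0,R]$, the arc $\{Re^{\I\alpha}:0\leq\alpha\leq\phi\}$, and the return segment. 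For $s$ in the open sector $\{-\pi/2<\arg s<\pi/2-\phi\}$ we have $\Re(se^{\I\alpha})\geq c_{s}>0$ uniformly for $\alpha\in[0,\phi]$; combining with the imaginary-part bound, the magnitude on the arc is $\leq e^{-c_{s}R}$ for large $R$, so the arc contribution is $O(R\phi e^{-c_{s}R})\to 0$. Hence $F=F_{\phi}$ on a non-empty open subset of $\{\Re s>0\}$, and then on all of $\{\Re s>0\}$ by the identity theorem; in particular $F_{\phi}$ furnishes the entire extension of $F$. A symmetric argument using the ray at angle $-\phi$ gives the entire extension of $G(s):=\int_{0}^{\infty}e^{-\I xW(x)-sx}\dif x$, and summing yields the entire extension of $\mathcal{L}\{\cos(xW(x));s\}=\tfrac{1}{2}(F+G)$.

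The main obstacle I anticipate is the imaginary-part lower bound for $\Phi$: while the formal linear approximation $\Im\Phi(x+\I y)\approx yV(x)$ is transparent, rigorously dominating the full Taylor tail uniformly in a fixed sector of $\Sigma$ requires careful book-keeping with the constants from \ref{itm: d}. Once that estimate is secured, the rest is a routine contour-deformation argument.
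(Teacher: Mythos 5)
Your argument is correct, and its skeleton is the same as the paper's: split $\cos(xW(x))$ into complex exponentials, use the Cauchy-type bounds \ref{itm: d} to continue $W$ holomorphically to a sector, use \ref{itm: c} to control the sign of the linear term in the Taylor expansion of the phase, and then rotate the contour, exploiting the superexponential decay of $e^{\I zW(z)}$ off the real axis to get an integral that is entire in $s$. The genuine difference is in the contour and the bookkeeping: you integrate along a \emph{fixed} ray of angle $\phi$ and prove the uniform sectorial bound $|\Im(zW(z))|\geq \tfrac12 |y|V(x)$ by expanding $\Phi(z)=zW(z)$ directly (your coefficient bound $|\Phi^{(n)}(x)|/n!\leq 3\cdot 2^{n}W(x)/x^{n-1}$ and the condition $\tan\phi_1\lesssim 1/8$ do make the tail at most half of the main term $|y|V(x)$, since $V\geq W$ by \ref{itm: c}), whereas the paper uses the variable-angle contour $r\mapsto r\exp(\I\arctan(W(r)^{-1/2}))$, whose shrinking angle makes the Taylor-tail errors uniformly $O(1)$ at the cost of a slightly weaker (but still superexponential) decay $\exp(-rW(r)/(2\sqrt{1+W(r)}))$; both choices work, and your fixed-ray version is arguably the more transparent one, precisely because you front-load the estimate you flag as the main obstacle. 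Two further cosmetic differences: the paper deforms only beyond a large $R_0$, which sidesteps the (harmless, but worth a line) corner of your contour at the origin --- there one should note that $W$ is bounded near $0$ in the sector, so the integrand is continuous up to $z=0$ and Cauchy's theorem applies after an $\epsilon$-truncation --- and the paper gets the second exponential for free from $\mathcal{L}\{\cos(xW(x));s\}=(F(s)+\overline{F(\overline{s})})/2$ rather than redoing the argument in the lower half-plane, though your symmetric treatment is equally valid.
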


\begin{proof}
We shall prove the continuation of 
\[	
	F(s) \coloneqq  \int_{0}^{\infty}\e^{\I xW(x)}\e^{-sx}\dif x,
\]
whence the lemma follows since $\mathcal{L} \{ \cos(xW(x)); s\} = \bigl( F(s) + \overline{F(\overline{s})} \bigr)/2$.

Using property \ref{itm: d}, one sees that the $n$-th Taylor coefficient of $W$ at $x$, $c_{n,x}$, satisfies $\abs[0]{c_{n, x}} \leq x^{-n}W(x)$, so that its Taylor series at $x$ has radius of convergence at least $x$. This shows that $W(z)$ has analytic continuation to the half-plane $\Re z >0$. 
The idea of the proof is to shift the integration contour to one where the real part of $\I zW(z)$ is sufficiently negative, in order to obtain an integral which is convergent for any value of $s\in \C$. 

Consider $z = R\e^{\I\theta}$ with $0\leq \theta \leq \pi/5
$. First we deduce some bounds on 
\[
	\Re(\I zW(z))= -R\bigl(\sin \theta \Re W(z) + \cos \theta \Im W(z) \bigr).
\]
Expanding $W$ in its Taylor series around $R\cos \theta$, we get
\begin{align*}
W(R\e^{\I\theta}) 	&= W(R\cos\theta) + \sum_{n=1}^{\infty}(-1)^{n}c_{2n, R\cos\theta}(R\sin\theta)^{2n} \\
				&{}+ \I\sum_{n=0}^{\infty}(-1)^{n}c_{2n+1,R\cos\theta}(R\sin\theta)^{2n+1}.
\end{align*}
Employing the bounds on $c_{n, R\cos\theta}$ and property \ref{itm: c}, which implies $c_{1, R\cos\theta}\geq 0$, we get
\begin{align}
\label{eq: W real part}
\Re W(R\e^{\I\theta})	&\geq W(R\cos\theta) - W(R\cos\theta)(\tan\theta)^{2}\sum_{n=0}^{\infty}(\tan\theta)^{2n},
 \\
\nonumber
\Im  W(R\e^{\I\theta})	&\geq - W(R\cos\theta)(\tan\theta)^{3}\sum_{n=0}^{\infty}(\tan\theta)^{2n}. 
\end{align}
If we  choose $\theta$ such that $(\tan\theta)^{2}\leq 1/W(R)$, we obtain
\begin{equation}
\label{eq: W real part special}
	\Re (\I zW(z)) \leq -R\bigl((\sin\theta) W(R\cos\theta) + O(1)\bigr).
\end{equation}

Consider now the contours
\[
	\Gamma_{R}\colon [R_{0}, R] \to \C\colon r \mapsto r\exp\biggl(\I \arctan\frac{1}{\sqrt{W(r)}}\biggr)
\]
for some $R_{0}$ sufficiently large (so that $\arctan(W(R_{0})^{-1/2})<\pi/5
$), and 
\[
	C_{R}\colon \bigl[0, \arctan \bigl(W(R)^{-1/2}\bigr)\bigr] \to \C\colon \theta \mapsto R\e^{\I\theta}.
\]
Using \eqref{eq: W real part}, one verifies that for $\Re s \geq\sigma_{0}$, with sufficiently large $\sigma_{0}$, the integral of the function $\e^{\I z W(z)}\e^{-s z}$ over $C_{R}$
 tends to 0 as $R\to\infty$. For the integral over $\Gamma_{\infty}$, we employ \eqref{eq: W real part special} and get
\[
	\abs[3]{\int_{\Gamma_{\infty}}\e^{\I zW(z)}\e^{-sz}\dif z} \ll \int_{R_{0}}^{\infty}\exp\biggl(-r\frac{W(r)}{2\sqrt{1+W(r)}} + (C+ \abs{s})r\biggr)\dif r,
\]
for some constant $C$, since $\sin\arctan\bigl(W(r)^{-1/2}\bigr) = (1+W(r))^{-1/2}$, $\dif z = O(1)\dif r$ by property \ref{itm: d}, and $W(r\cos\theta) \geq W(r)/2$ for $\theta\le \pi/3$ by property \ref{itm: b}. Since $\sqrt{W(r)} \to \infty$, the integral over $\Gamma_{\infty}$ converges absolutely and uniformly for $s$ on any compact subset of $\C$, and hence represents an entire function. In conclusion, the formula
\[
	F(s)  = \int_{[0,R_{0}]\cup C_{R_{0}}\cup \Gamma_{\infty}}\e^{\I zW(z)}\e^{-sz}\dif z,
\]
valid for $s$ in a certain right half-plane in view of Cauchy's theorem,
yields the analytic continuation of $F(s)$ to $\C$.
\end{proof}
\begin{remark}
\label{rk: Bloch function} Let $W$ be an unbounded smooth function satisfying properties \ref{itm: c} and \ref{itm: d} from Lemma \ref{lem: reg}. Similarly as in Lemma \ref{lem: omega result},
\[ \int_{0}^{x}\e^{\sigma u}\cos \bigl(uW(u) \bigr) \dif u=
 \frac{\e^{\sigma x}}{V(x)}\sin\bigl( xW(x)\bigr) + O_{\sigma}\biggl( \frac{\e^{\sigma x}}{V(x)^{2}}  \biggr)
\]
is unbounded for each fixed $\sigma>0$. This and Lemma \ref{lem: analytic continuation} imply that the function $f(x)=\e^{\sigma_{0}x}\cos \bigl(xW(x) \bigr)$ furnishes an example of an exponentially bounded function with abscissa of convergence $\sigma_{0}$ for its Laplace transform (as an improper integral), but whose Laplace transform has entire extension. Furthermore, one might verify that this entire extension is unbounded on any half-plane $\Re s>\sigma_1$ with $\sigma_1<\sigma_0$. Interestingly, Bloch \cite{Bloch1949} has given an example of a function whose Laplace transform extends to an entire function that is bounded on every right half-plane, but has finite abscissa of convergence. In contrast to our example, these properties imply that Bloch's function cannot be exponentially bounded, as follows from \cite[Theorem~4.4.19, p.~287]{A-B-H-N}, but this can also be readily seen from Bloch's construction.
\end{remark}


\section{The examples }
\label{section: the examples W-I and I-K}
We have already done all the necessary work in order to establish Theorem \ref{th: absence in W-I and I-K}. We set 
\[
\tilde{\rho}(x) = \sup_{y\ge x}\rho(y), \quad \omega(x) = \min\bigl(\sqrt{x}, 1/\tilde{\rho}\bigl(\sqrt{x}\bigr)\bigr),
\]
and let $W$ then be a function fulfilling the conditions \ref{itm: a}-\ref{itm: d} from Lemma \ref{lem: reg}.

\begin{example}[Proof of Theorem \ref{th: absence in W-I and I-K}(i)]
\label{counterexample 1 W-I}
We consider the non-decreasing function
\[
	S(x) = \int_{0}^{x}\e^{u}\bigl(1 + \cos\bigl(uW(u) \bigr)\bigr) \dif u, \qquad x\geq 0.
\]
 Since 
\begin{align*}
W(x)\leq V(x) \leq 2 W(x) \ll \omega(x^{2}) \leq  1/\rho(x),
\end{align*}
Lemma \ref{lem: omega result} tells us that
 $S(x) = \e^{x} + \Omega_{\pm}(\e^{x}\rho(x))$. On the other hand, by Lemma \ref{lem: analytic continuation}, its Laplace-Stieltjes transform $\mathcal{L}\{\dif S; s\}$ extends to a meromorphic function on $\C$ with a single simple pole with residue 1 at $s=1$.
\end{example}

\begin{example}[Proof of Theorem \ref{th: absence in W-I and I-K}(ii)]
\label{counterexample 2 I-K}
This time we define our example as
\[
	\tau(x) = \int_{x}^{\infty} \cos\bigl(uW(u) \bigr) \dif u, \qquad x\geq 0.
\]
Then, integrating by parts as in Lemma \ref{lem: omega result}, using  property \ref{itm: d}, the bound $V(x)\asymp W(x)$, and the fact that $W$ is non-decreasing,
\begin{align*}
\int_{x}^{y}\cos\bigl(uW(u) \bigr) \dif u &= \frac{\sin (yW(y))}{V(y)}- \frac{\sin (xW(x))}{V(x)}+ \frac{V'(x)\cos (xW(x))}{V(x)^{3}}\\
&
\quad - \frac{V'(y)\cos (yW(y))}{V(y)^{3}} +\int_{x}^{y} \cos(uW(u))\left(\frac{V''(u)}{V(u)^{3}}-3\frac{V'(u)^{2}}{V(u)^{4}}\right) \dif u
\\
&
=\frac{\sin (yW(y))}{V(y)}- \frac{\sin (xW(x))}{V(x)}+ O\left(\frac{1}{V(x)^{2}}\right) +\int_{x}^{y} O\left(\frac{1}{u^{2}V(u)^{2}}\right) \dif u
\\
&
= \frac{\sin (yW(y))}{V(y)}- \frac{\sin (xW(x))}{V(x)}+ O\left(\frac{1}{V(x)^{2}}\right),
\end{align*}
so that the defining improper integral indeed converges and $\tau(x) = \Omega_{\pm}(1/V(x)) = \Omega_{\pm}(\rho(x))$. That the Laplace transform of $\tau$ has entire extension follows directly from Lemma \ref{lem: analytic continuation}.
\end{example}

\end{document}